\numberwithin{equation}{section}
\def\irr#1{{\Irr}(#1)}
\def\ibr#1{{\IBr}(#1)}
\def\galn#1#2{{\rm Gal}(#1/#2)}
\def\Oh#1#2{{\bf O}^{#1}(#2)}
\def\zent#1{{\bf Z}(#1)}
\def\syl#1#2{{\rm Syl}_{#1}(#2)}
\def\nor{\triangleleft}
\def\det#1{{\rm det}(#1)}
\def\ker#1{{\rm ker}(#1)}
\def\norm#1#2{{\bf N}_{#1}(#2)}
\def\cent#1#2{{\bf C}_{#1}(#2)}
\let\phi=\varphi
\def\sbs{\subseteq}
\newtheorem{lem}[subsection]{Lemma}
\newtheorem{teo}[subsection]{Theorem}
\newtheorem*{teoA}{Theorem A}
\theoremstyle{definition}
\theoremstyle{definition}
\theoremstyle{definition}
\theoremstyle{definition}
\def\C{\mathbb C}
\def\Q{\mathbb Q}
\newcommand{\IBr}{\operatorname{IBr}}
\newcommand{\Irr}{\operatorname{Irr}}
\begin{document}

\thanks{This research  is partially supported by the Spanish Ministerio de Educaci\'on y Ciencia MTM2013-40464-P}

\author{Carolina Vallejo Rodr\'iguez}
\address{Departament d'\`Algebra, Universitat de Val\`encia, 46100 Burjassot, Val\`encia, Spain.}
\email{carolina.vallejo@uv.es}

\title[Feit Numbers and $p'$-Degree Characters]{Feit Numbers and $p'$-Degree Characters}

\date{\today}

\begin{abstract}
Suppose that $\chi$ is an irreducible complex character of a finite group $G$
and let $f_\chi$ be the smallest integer $n$ such that
the cyclotomic field $\Q_n$ contains the values of $\chi$.
Let $p$ be a prime, and assume that $\chi \in \irr G$
has degree not divisible by $p$. We show that if $G$ is solvable and $\chi(1)$ is odd,
then there exists $ g \in \norm GP/P'$ with $o(g)=f_\chi$, where $P\in \syl p G$. In particular, $f_\chi$ divides $|\norm G P : P'|$.
\end{abstract}

\keywords{Feit number, solvable group, special character}
\subjclass[2010]{20C15}
\maketitle

\section*{Introduction}\label{sec:intro}
\noindent
Suppose that $G$ is a finite group and let $\chi \in \irr G$ be an irreducible
 complex character of $G$. Among the different numbers that one can associate to the character $\chi$
 (such as the degree $\chi(1)$, or the determinantal order $o(\chi)$ of $\chi$), we are concerned here
 with the so called {\bf Feit number} $f_\chi$ of $\chi$, which is the smallest possible
 integer $n$ such that the field of values of $\chi$ is contained in the cyclotomic field
 $\Q_n$ (obtained by adjoining a primitive $n$-th root of unity to $\Q$). Since $\chi(g)$ is a sum of $o(g)$-roots of unity for $g \in G$, notice
 that $f_\chi$ is always a divisor of $|G|$.
 
 \smallskip
 
 The number $f_\chi$ is a classical invariant in character theory that has been studied by
 Burnside, Blichfeldt and Brauer, among others. But it was W. Feit who following work of Blichfeldt made an astonishing conjecture
 that remains open until today:
 If $G$ is a finite group and $\chi \in \irr G$, then there is $g \in G$ of order $f_\chi$ (see for instance \cite{Fei80}).
 This conjecture was proven to be true by G. Amit and D. Chillag in \cite{AC86}
 for solvable groups. 
 
 \smallskip
 
Our aim in this note  is 
 to come back to the Amit-Chillag theorem to 
prove a global/local (with respect to a prime $p$)
variation (for odd-degree characters with $p'$-degree).

\begin{teoA}
Let $p$ be a prime and let $G$ be a finite solvable group. Let $\chi \in \irr G$
of degree not divisible by $p$, and let $P \in \syl pG$.
If $\chi(1)$ is odd, then there
exists $g \in \norm GP/P'$ such that $o(g)=f_\chi$. In particular, the Feit number $f_\chi$ divides $|\norm G P : P'|$.
\end{teoA}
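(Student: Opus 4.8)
The plan is to reduce Theorem~A to a statement about a single ``$p$-local'' object and then exploit the theory of special (or $B_\pi$) characters of odd degree, in the style of Isaacs and of the Amit--Chillag argument. First I would recall that the Amit--Chillag theorem already gives, for solvable $G$ and $\chi\in\irr G$, an element $h\in G$ with $o(h)=f_\chi$; the whole point is to produce such an element inside the normalizer $\norm GP$, working modulo $P'$. The natural mechanism is to attach to the $p'$-degree character $\chi$ a canonical $p$-local correspondent. Since $\chi(1)$ is not divisible by $p$, one expects $\chi$ to lie over a character of a $p$-complement-type object, or better, to correspond (via the McKay-type bijection for $p$-solvable groups, e.g.\ Isaacs' $B_{p'}$-characters or Okuyama--Wajima/Turull refinements) to a character $\chi^*$ of $\norm GP$ of $p'$-degree. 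The key property I need from this correspondence is that it is compatible with field automorphisms, so that the field of values (hence the Feit number) is preserved: $f_{\chi^*}=f_\chi$. Then applying the Amit--Chillag theorem to the solvable group $\norm GP$ and the character $\chi^*$ yields $g_0\in\norm GP$ with $o(g_0)=f_{\chi^*}=f_\chi$, and reducing modulo $P'$ (after checking $p'$-part/$p$-part orders behave well) gives the element $g\in\norm GP/P'$ of order $f_\chi$.

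The odd-degree hypothesis is where the special-character machinery enters and is, I expect, the main obstacle. The field-automorphism compatibility of the natural $p$-local bijection is not automatic for all $p$; for $p=2$ it is essentially forced because a degree-one issue degenerates, and for odd $p$ one can often invoke a canonical (Glauberman--Isaacs-type) correspondence which is automorphism-equivariant. But to get a clean statement for all primes simultaneously, the right move is: if $p$ is odd, use that $\chi(1)$ odd together with solvability lets us pass to a linear-limit/vertex situation where the correspondent is canonical; if $p=2$, then $p'$-degree means odd degree, and we are exactly in the Amit--Chillag odd-degree setting restricted to $\norm GP=\norm GP$, where one can use Isaacs' theory of $2$-special characters and the fact that an odd-degree character of a solvable group factors through its standard $2$-local structure with fields of values controlled by $\norm GP/P'$. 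So I would split the proof at the outset into the cases $p=2$ and $p$ odd, handling $p=2$ by a direct induction mimicking Amit--Chillag but carrying the extra bookkeeping that the relevant cyclic section lies in $\norm GP/P'$, and handling odd $p$ via a canonical field-preserving McKay correspondent into $\norm GP$.

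Concretely, for the inductive core I would argue by induction on $|G|$. Pick a minimal normal subgroup $N$ of $G$; since $G$ is solvable, $N$ is an elementary abelian $q$-group. Distinguish $q=p$ from $q\ne p$. If $q\ne p$, then $\chi$ lies over some $\theta\in\irr N$, the stabilizer $G_\theta$ is proper or equal to $G$; by Clifford theory and Fong--Reynolds one reduces to the stabilizer, and a Sylow $p$-subgroup of $G_\theta$ can be taken inside $P$, so $\norm{G_\theta}{P\cap G_\theta}$ sits appropriately inside $\norm GP$ — the field of values only shrinks or is controlled, and induction applies. If $q=p$, then $N\le P$, in fact $N\le\oh pG$; I would use that a $p'$-degree character of $G$ is ``above'' a linear character of $\oh pG$ in a controlled way (Isaacs), pass to $G/N$ or to the inertia subgroup of a linear constituent, and track how $f_\chi$ is affected — the adjustment to $f_\chi$ coming from $\oh pG$ is by a power of $p$ realized by an element of $P\le\norm GP$, and modulo $P'$ this is exactly what we can afford to lose. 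The delicate point throughout is the interaction of determinantal/field data with the Clifford correspondence when the stabilizer equals $G$ and $G/N$ acts on $\irr N$ with a character that does not extend; this is precisely where the odd-degree hypothesis should rule out the obstruction (a Schur-multiplier/cohomological $2$-obstruction), just as oddness is used in the Amit--Chillag argument and in Isaacs' odd-order/odd-degree factorization theorems.
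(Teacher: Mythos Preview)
Your high-level strategy---produce a correspondent $\chi^*$ on a subgroup containing $\norm GP$ with $f_{\chi^*}=f_\chi$, then induct---is exactly the shape of the paper's argument. But the implementation you sketch has a real gap: you invoke a ``canonical field-preserving McKay correspondent'' as a black box, and that object, with the field-of-values compatibility you need, is precisely what has to be \emph{constructed}. Turull-type refinements of McKay exist, but quoting them here would be circular or would require hypotheses you do not have; the Glauberman--Isaacs correspondence concerns coprime actions and does not hand you a map $\irr G\to\irr{\norm GP}$ in this generality. Likewise, your inductive sketch via a minimal normal subgroup $N$ stalls exactly at the hard case $G_\theta=G$: you say Clifford theory ``reduces to the stabilizer'', but when the stabilizer is all of $G$ nothing has been reduced, and this is where the real work lies.

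The paper handles that case quite differently from anything you propose. After the Clifford reduction you mention, it reduces further to $\chi$ \emph{primitive}, and then invokes Isaacs' factorization of a primitive character of a solvable group as a product $\chi=\prod_q\chi_q$ of $q$-special characters. One then shows $f_\chi=\prod_q f_{\chi_q}$ and builds $\chi^*$ on $H=L\,\norm GP$ (where $K=\Oh{p',p}G$ and $K/L$ is a chief factor) \emph{prime by prime}: for $q\in\{2,p\}$ the factor $\chi_q$ is linear---and this is exactly where the odd-degree hypothesis is used, to force $\chi_2(1)=1$---so $\chi_q^*=(\chi_q)_H$ works; for odd $q\ne p$ one is in a fully-ramified or restriction-bijective situation over $K/L$, and the correspondent is built using Isaacs' character-five machinery (the canonical character $\psi$ attached to $(G,K,L,\theta,\phi)$), with a careful check that $\Q(\eta)=\Q(\eta_0)$. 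Your diagnosis of the odd-degree hypothesis as a ``Schur-multiplier/cohomological $2$-obstruction'' is off target: it is not an extension obstruction but simply the fact that the $2$-special factor must be linear for its restriction to $H$ to be usable. Finally, your case split $p=2$ versus $p$ odd is unnecessary; the paper treats all primes uniformly once the special-character factorization is in hand.
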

 
 It is unfortunate that we really need to assume that
 $\chi(1)$ is odd,  as $G=\textrm{GL}(2,3)$ shows us: if $\chi \in \irr G$  is non-rational of degree $2$, then $f_\chi=8$; but the normalizer of a Sylow 3-subgroup of $G$ has exponent 6.
 Also, Theorem A is not true outside solvable groups,
 as shown by $G=\textrm{A}_5$, $p=2$,
 and any $\chi \in \irr G$ of degree 3 (which has $f_\chi=5$).   
\smallskip

We shall conclude this note by recording the fact that
Feit's conjecture is also true for Brauer characters, but only
for solvable groups. ($G=\textrm{A}_8$ has (2-)Brauer characters
$\phi \in \ibr G$ with $f_\phi=105$ but no elements of that order.)

\section{Proofs}
\noindent We begin with a well-known elementary lemma.
\begin{lem}\label{prev}
Let $N\nor G$ be finite groups. Let $\chi \in \irr G $ and let $P \in \syl p G$ for some prime $p$. If $\chi(1)$ is not divisible by $p$, then some constituent of $\chi_N$ is $P$-invariant and any two are $\norm G P$-conjugate.
\end{lem}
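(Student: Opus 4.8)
The plan is to deduce the statement from a standard fact about transitive group actions on sets of size prime to $p$. First fix an irreducible constituent $\theta$ of $\chi_N$. By Clifford's theorem, the set $\Omega$ of irreducible constituents of $\chi_N$ is a single $G$-orbit and $\chi_N=e\sum_{\vartheta\in\Omega}\vartheta$ for some positive integer $e$; hence $\chi(1)=e\,|\Omega|\,\theta(1)$, so that $|\Omega|=|G:I_G(\theta)|$ divides $\chi(1)$. Since $p\nmid\chi(1)$ by hypothesis, we conclude that $p\nmid|\Omega|$.

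The key auxiliary fact is the following: \emph{if a finite group $G$ acts transitively on a set $\Omega$ with $p\nmid|\Omega|$ and $P\in\syl{p}{G}$, then $P$ fixes some point of $\Omega$, and $\norm{G}{P}$ acts transitively on the set $\Omega^{P}$ of $P$-fixed points.} The existence of a fixed point is immediate, since the $P$-orbits on $\Omega$ have $p$-power length and so the number of $P$-fixed points is congruent to $|\Omega|$ modulo $p$, hence nonzero. For the transitivity, take $\alpha,\beta\in\Omega^{P}$ and write $\beta=\alpha\cdot g$ with $g\in G$. From $P$ fixing $\beta$ one gets $gPg^{-1}\le G_\alpha$, and combining this with $P\le G_\alpha$ and $p\nmid|G:G_\alpha|=|\Omega|$, both $P$ and $gPg^{-1}$ are Sylow $p$-subgroups of $G_\alpha$. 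Conjugating $gPg^{-1}$ to $P$ by a suitable $x\in G_\alpha$ then yields $xg\in\norm{G}{P}$ with $\alpha\cdot(xg)=\beta$.

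Applying this to the action of $G$ on the orbit $\Omega$ of the constituents of $\chi_N$ gives both conclusions of the lemma at once: some constituent of $\chi_N$ is $P$-invariant, and any two $P$-invariant constituents lie in a single $\norm{G}{P}$-orbit. I do not anticipate any genuine obstacle; the only point requiring care is the conjugation in the last step, and it is precisely there that the hypothesis $p\nmid|\Omega|$ is used, to guarantee that the point stabilizers contain full Sylow $p$-subgroups of $G$.
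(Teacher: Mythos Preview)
Your proposal is correct and follows essentially the same Sylow-theoretic argument as the paper. The only cosmetic difference is that you abstract the situation to a general lemma about transitive actions on sets of $p'$-size (and obtain the existence of a $P$-fixed point by a counting-modulo-$p$ argument), whereas the paper works directly with the characters and gets the $P$-invariant constituent by conjugating $P$ into the inertia group via Sylow's theorem; the transitivity step---conjugating two Sylow $p$-subgroups inside a point stabilizer---is identical in both.
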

\begin{proof} Let $\theta \in \irr N$ be under $\chi$,
and let $G_\theta$ be the stabilizer
of $\theta$ in $G$. Since $|G:G_\theta|$ is not divisible by $p$, we have that $P^g \sbs G_\theta$ for some $g \in G$. Hence $\phi=\theta^{g^{-1}}$ is a $P$-invariant constituent of $\chi_N$.  Let $\tau \in \irr N$ be $P$-invariant under $\chi$. Then $\tau=\phi^x$ for some $x \in G$ by Clifford's theorem. Hence $P, P^x \sbs G_\tau$ and there exists some $t \in G_\tau$ such that $P=P^{xt}$. Since $\phi^{xt}=\tau^t=\tau$, the result follows.
\end{proof}

Let $\chi \in \irr G$. We write $\Q(\chi)=\Q(\chi(g) \ | \ g \in G)$, the field of values of $\chi$. We will use the following well-known result about Gajendragadkar special characters. We recall that if $G$ is a $p$-solvable group, then $\chi \in \irr G$ is $p$-special if $\chi(1)$ is a power of $p$ and every subnormal constituent of $\chi$ has determinantal order a power of $p$.

\begin{lem}\label{injection}
Let $G$ be a finite $p$-solvable group and let $P \in \syl p G$. Then restriction yields an injection from the set of $p$-special characters of $G$ into the set of characters of $P$. In particular, if $\chi$ is $p$-special, then $\Q(\chi)=\Q(\chi_P)\sbs \Q_{|G|_p}$
and $f_\chi$ is a power of $p$.
\end{lem}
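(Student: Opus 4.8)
The statement splits into the injectivity of restriction to $P$ and the ``in particular'' clause, and my plan is to reduce the latter entirely to the former. Assuming $\chi\mapsto\chi_P$ is injective on $p$-special characters, the inclusion $\Q(\chi_P)\sbs\Q(\chi)$ is trivial; for the reverse I would take $\sigma\in\galn{\Q_{|G|}}{\Q}$ fixing $\Q(\chi_P)$ pointwise and observe that $\chi^\sigma\in\irr G$ is again $p$-special (Galois action commutes with restriction to subnormal subgroups and preserves degrees and the orders of linear characters, hence determinantal orders) and satisfies $(\chi^\sigma)_P=(\chi_P)^\sigma=\chi_P$; injectivity forces $\chi^\sigma=\chi$, so $\sigma$ fixes $\Q(\chi)$, whence $\Q(\chi)\sbs\Q(\chi_P)$ by Galois theory. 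Since $\chi_P$ is a character of the $p$-group $P$, its values lie in $\Q_{|P|}=\Q_{|G|_p}$, so $\Q(\chi)=\Q(\chi_P)\sbs\Q_{|G|_p}$, and therefore $f_\chi$ divides $|G|_p$, a power of $p$.

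For the injectivity I would follow Gajendragadkar and induct on $|G|$, proving the stronger assertion that $\chi_P$ is \emph{irreducible} and that $\chi\mapsto\chi_P$ is injective on $p$-special characters. From Gajendragadkar's theory of $\pi$-special characters (with $\pi=\{p\}$) I would use: (i) for $N\nor G$ and $\chi$ $p$-special, every irreducible constituent of $\chi_N$ is $p$-special and these form a single $G$-orbit; (ii) $G$-conjugation preserves $p$-specialness; and (iii) a $G$-invariant $p$-special $\theta\in\irr N$ with $G/N$ a $p'$-group has a unique $p$-special extension to $G$. The base case $P=G$ is vacuous, so assume $P<G$; then $G$ has a maximal normal subgroup $M$, and since $G$ is $p$-solvable the simple group $G/M$ is either cyclic of order $p$ or a nontrivial $p'$-group. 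In both cases $M<G$, so the inductive hypothesis applies to $M$ and to $P\cap M\in\syl p M$.

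If $G/M$ is a nontrivial $p'$-group then $P\sbs M$. A $p$-special $\chi$ lies over some $\theta\in\irr M$, which by (i) is $p$-special and whose inertia index $|G:G_\theta|$ divides both $|G:M|$ (a $p'$-number) and $\chi(1)$ (a power of $p$), hence equals $1$; so by (iii) and Gallagher's theorem $\chi$ is \emph{the} $p$-special extension of $\theta$ and $\chi_M=\theta$. Then $\chi_P=\theta_P$ is irreducible by induction, and $\chi\mapsto\chi_P$ is injective because $\chi$ is recovered from $\theta=\chi_M$ and $\theta$ from $\theta_P$ (by induction). This is exactly where $p$-specialness is needed: without the uniqueness in (iii), the twists $\chi\lambda$ by linear characters $\lambda$ of $G/M$ would all share the restriction $\chi_P$, since $\lambda_P=1$ as $P\sbs M$. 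If instead $|G:M|=p$, write $P_0=P\cap M\in\syl p M$, an index-$p$ subgroup of $P$; by Clifford theory $\chi_M$ is either an irreducible (hence $G$-invariant) $\theta$, or the sum of $p$ distinct $G$-conjugates of some non-$G$-invariant $\theta\in\irr M$ with $\chi=\theta^G$, and $\theta$ is $p$-special either way. Restricting to $P_0$ distinguishes the two subcases ($\theta_{P_0}$ is irreducible by induction, while in the second subcase $\chi_{P_0}$ is a sum of $p$ distinct irreducibles, distinct by the inductive injectivity on $M$), so it suffices to treat each subcase separately. In the first, $\chi_{P_0}=\theta_{P_0}$ is irreducible, so $\chi_P$ is; and any other such $\chi'$ with $\chi'_P=\chi_P$ has $\chi'_M=\chi_M$ by induction, hence $\chi'=\chi\lambda$ for a linear character $\lambda$ of $G/M$, and if $\lambda\ne 1$ then $\chi_P=\chi_P\lambda_P$ with $\lambda_P\ne1$ of kernel $P_0$ (as $P$ surjects onto $G/M$, cyclic of order $p$) forces $\chi_P$ to vanish off $P_0$ and hence $[\chi_P,\chi_P]=\tfrac1p$, an absurdity. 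In the second, $G=PM$, so Mackey gives $\chi_P=\mathrm{Ind}_{P_0}^P(\theta_{P_0})$, and $\theta_{P_0}$ is irreducible and non-$P$-invariant (its $P$-conjugates being the restrictions to $P_0$ of the distinct $p$-special characters $\theta^x$, which stay distinct by the inductive injectivity on $M$), so $\chi_P$ is irreducible; and if $\chi=\theta^G$ and $\chi'=(\theta')^G$ have the same restriction to $P$, then $\mathrm{Ind}_{P_0}^P(\theta_{P_0})=\mathrm{Ind}_{P_0}^P(\theta'_{P_0})$ makes $\theta'_{P_0}$ a $P$-conjugate of $\theta_{P_0}$, hence $\theta'=\theta^x$ by induction, hence $\chi'=\chi$. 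This completes the induction.

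The genuinely substantive input is the Gajendragadkar package (i)--(iii), which itself rests on the interplay of degrees, determinantal orders, and extendibility over $p'$-quotients; once it is in hand the rest is Clifford theory, Mackey, and bookkeeping. I expect the main obstacle to be the case $|G:M|=p$: one must keep track of the ramified behaviour and verify that distinct $p$-special extensions — and distinct $G$-conjugates — of the same $\theta\in\irr M$ stay distinct after restriction to $P$. Closely related is the fact that one is forced to carry the stronger conclusion ``$\chi_P$ is irreducible'' through the induction even though the lemma only asserts an injection, since it is precisely that irreducibility which powers the norm computation $[\chi_P,\chi_P]=\tfrac1p$ ruling out $\chi'=\chi\lambda$.
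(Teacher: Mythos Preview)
Your argument is correct. The derivation of the ``in particular'' clause from injectivity via the Galois action is exactly the standard one (and is how Gajendragadkar himself obtains his Corollary~6.3), and your inductive proof of irreducibility and injectivity of $\chi\mapsto\chi_P$ is a faithful reconstruction of Gajendragadkar's Proposition~6.1 in the case $\pi=\{p\}$, with the Clifford/Mackey bookkeeping handled cleanly; in particular the norm computation $[\chi_P,\chi_P]=\tfrac1p$ in sub-case~2a and the use of inductive injectivity on $M$ to keep the $P$-conjugates $(\theta^x)_{P_0}$ distinct in sub-case~2b are both sound.

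By contrast, the paper does not reprove anything: its proof consists of a single sentence citing Proposition~6.1 and Corollary~6.3 of \cite{Gaj79}. So you and the paper ultimately rely on the same source, but where the paper invokes Gajendragadkar's end results as black boxes, you unpack the induction using only the more primitive ingredients (i)--(iii) of his theory. The payoff of your approach is that it makes transparent \emph{why} $p$-specialness is essential (the unique-extension property (iii) over $p'$-quotients is precisely what kills the ambiguity $\chi\mapsto\chi\lambda$), and it yields the stronger conclusion $\chi_P\in\irr P$ along the way; the paper's citation is of course shorter and appropriate for a lemma that is entirely standard in the literature.
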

\begin{proof}
This is a particular case of Proposition 6.1 of \cite{Gaj79}. See also Corollary 6.3 of \cite{Gaj79}.
\end{proof}

The following result will help us to control fields of values under certain circumstances.

\begin{lem}\label{qroot}
Let $G$ be a finite group, let $q$ be a prime and let $\zeta$ be a primitive $q$-th root of unity. Suppose that $G$ is $q$-solvable and $\chi \in \irr G$ is $q$-special. If $\chi \neq 1$, then $ \zeta \in \Q(\chi)$.
\end{lem}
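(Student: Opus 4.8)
The plan is to induct on $|G|$. We may assume $q$ is odd, since for $q=2$ one has $\zeta=-1\in\Q\sbs\Q(\chi)$. If $\chi$ is linear, then $\chi=\det\chi$ has order a power of $q$, say $q^c$, and $\chi\ne 1$ forces $c\ge 1$, so the image of $\chi$ is cyclic of order $q^c$, whence $\chi$ attains a primitive $q^c$-th root of unity as a value and $\Q(\chi)=\Q_{q^c}\supseteq\Q_q\ni\zeta$. So assume $\chi$ is non-linear. If $G$ has no proper nontrivial normal subgroup then, being $q$-solvable, $G$ is a $q$-group (so $|G|=q$ and $\chi$ is linear) or a $q'$-group (so every $q$-special character is trivial), both of which are excluded; hence fix a minimal normal subgroup $N$ of $G$ with $1\ne N\ne G$. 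By Clifford's theorem together with Gajendragadkar's results \cite{Gaj79}, every irreducible constituent of $\chi_N$ is $q$-special; if $N$ is a $q'$-group these constituents are trivial, and if $N$ is a $q$-group all of whose $\chi$-constituents are trivial the same conclusion holds, so in either case $N\sbs\ker\chi$ and $\chi$ inflates a nontrivial $q$-special character $\bar\chi$ of $G/N$, to which the inductive hypothesis applies ($\Q(\bar\chi)=\Q(\chi)$). Thus we may assume $N$ is an elementary abelian $q$-group and $\chi$ lies over a nontrivial $\lambda\in\irr N$; then $\lambda$ has order $q$.

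If $\chi_N$ is homogeneous, say $\chi_N=e\lambda$ with $\lambda$ $G$-invariant, then $\lambda=\frac1e\chi_N$ shows $\Q_q=\Q(\lambda)\sbs\Q(\chi)$ and we are done. Otherwise let $\psi\in\irr T$ be the Clifford correspondent of $\chi$ over $\lambda$, with $T=G_\lambda$ a proper subgroup of $G$. Then $\psi$ is $q$-special by \cite{Gaj79} and $\psi\ne 1$; and since $\chi(1)=|G:T|\,\psi(1)$ is a power of $q$, so is $|G:T|=:q^m$, with $m\ge 1$. Moreover the $G$-orbit of $\psi$ matches the $G$-orbit of $\lambda$ under the Clifford correspondence, hence has size $q^m$, and the inductive hypothesis applies to $\psi$ and to each of its conjugates, all of which are nontrivial $q$-special characters of the $q$-solvable group $T$.

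The crux is to pass from $\zeta\in\Q(\psi)$ to $\zeta\in\Q(\chi)$, which I would do by contradiction. Suppose $\zeta\notin\Q(\chi)$. By Lemma \ref{injection} we have $\Q(\chi)\sbs\Q_{q^A}$ for $A$ large, and $\galn{\Q_{q^A}}{\Q_q}$ is the Sylow $q$-subgroup of the abelian group $\galn{\Q_{q^A}}{\Q}$; hence there is a $\sigma\in\galn{\Q_{q^A}}{\Q}$ of prime order $r$ dividing $q-1$ with $\chi^\sigma=\chi$ but $\sigma\notin\galn{\Q_{q^A}}{\Q_q}$. Since $\chi=\chi^\sigma$ lies over $\lambda^\sigma$, the latter is $G$-conjugate to $\lambda$; writing $\sigma$ as raising roots of unity to an integer $t$ coprime to $q$, multiplication by $t$ commutes with the $G$-action on $\irr N$, so $G_{\lambda^\sigma}=T$, and then uniqueness in the Clifford correspondence (both $\psi^\sigma$ and a suitable $\psi^g$ lie in $\irr T$ over $\lambda^\sigma$ and induce to $\chi$) forces $\psi^\sigma=\psi^g$ for some $g\in\norm GT$. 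Now $\sigma$ acts on the set of $\norm GT$-conjugates of $\psi$ via $\psi^n\mapsto\psi^{gn}$; this set has $q$-power size $q^{m'}$, being a suborbit of the $G$-orbit of $\psi$, and $\sigma^r=1$, so the number of $\sigma$-fixed points is $\equiv q^{m'}\equiv 1\pmod r$, hence positive. Therefore some $\psi^n$ with $n\in\norm GT$ is $\sigma$-invariant; but $\psi^n$ is a nontrivial $q$-special character of $T$, so by induction $\zeta\in\Q(\psi^n)$, which forces $\sigma\in\galn{\Q_{q^A}}{\Q_q}$ — a contradiction.

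The step I expect to be genuinely delicate is the last paragraph: matching the Galois twist $\psi^\sigma$ with an honest $G$-conjugate of $\psi$ inside $\irr T$ via uniqueness of Clifford correspondents, and then extracting a $\sigma$-fixed conjugate from the fixed-point count. Everything else reduces to standard properties of $q$-special characters under restriction, inflation, and Clifford theory, for which I would cite \cite{Gaj79}.
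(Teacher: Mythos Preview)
Your argument is correct, but it is considerably more elaborate than the paper's. The paper dispatches the lemma in a few lines: Lemma~\ref{injection} gives $\Q(\chi)=\Q(\chi_Q)$ for $Q\in\syl q G$, and injectivity of restriction forces $\chi_Q\ne 1_Q$, so one may assume outright that $G$ is a $q$-group; after modding out $\ker\chi$ and picking $x\in\zent G$ of order $q$, one has $\chi_{\langle x\rangle}=\chi(1)\lambda$ with $\lambda$ faithful, whence $\zeta\in\Q(\chi)$. You never exploit the field-of-values equality furnished by the preceding lemma, and instead run an induction through minimal normal subgroups, Clifford correspondents, and a Galois fixed-point count on the $\norm GT$-orbit of $\psi$. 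Two small remarks on your route: first, you do not actually need the inductive hypothesis (nor the $q$-speciality of $\psi$) in the final step, since $(\psi^n)_N$ is already a multiple of the order-$q$ linear character $\lambda^n$, giving $\zeta\in\Q(\lambda^n)\sbs\Q(\psi^n)$ directly; second, the assertion that the Clifford correspondent of a $q$-special character is $q$-special, while true, is not stated in quite that form in \cite{Gaj79}, so you should pin down the precise reference (or simply drop it, per the first remark). The paper's approach buys a one-paragraph proof with no Clifford theory whatsoever.
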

\begin{proof}
Let $Q$ be a Sylow $q$-subgroup of $G$. By Lemma \ref{injection}, we have that $\psi \mapsto \psi_Q$ is an injection from the set of $q$-special characters of $G$ into the set $\irr Q$. In particular $\Q(\chi)=\Q(\chi_Q)$. Of course $\chi_Q\neq 1$. Thus, we may assume that $G$ is a $q$-group. We also may assume that $\chi$ is faithful by modding out by $\ker \chi$. Choose $x \in \zent G$ of order $q$. We have that $\chi_{\langle x \rangle}=\chi(1)\lambda$, where $\lambda \in \irr{\langle x \rangle}$ is faithful. Hence $\lambda(x^i)=\zeta$ for some integer $i$. In particular $\zeta \in \Q(\chi)$.
\end{proof}

The following observation will be used later.

\begin{lem}\label{normalizers}
Suppose that $\lambda$ is a linear character of a finite
group, and let $P \in \syl pG$. Let $\norm G P \sbs H \leq G$  and let $\nu=\lambda_H$.
Then $o(\lambda)=o(\nu)$.
\end{lem}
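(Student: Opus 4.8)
The statement to prove is Lemma \ref{normalizers}: if $\lambda$ is a linear character of a finite group $G$, $P \in \syl p G$, and $\norm G P \subseteq H \leq G$ with $\nu = \lambda_H$, then $o(\lambda) = o(\nu)$. Since $\lambda$ is linear, $o(\lambda)$ is just the order of $\lambda$ as an element of the group $\irr G$ of linear characters (equivalently, the order of $\lambda$ in $\aut{\C^\times}$-terms, i.e.\ the order of the cyclic image $\lambda(G)$). Clearly $o(\nu) \mid o(\lambda)$, since $\nu$ is the restriction and $\nu^{o(\lambda)} = (\lambda^{o(\lambda)})_H = 1_H$. So the content is the reverse divisibility: $o(\lambda) \mid o(\nu)$.

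**The plan.** Write $o(\lambda) = n$ and factor $n = \prod_q q^{a_q}$ over primes $q$. It suffices to show that for each prime $q$, the $q$-part $q^{a_q}$ of $o(\lambda)$ divides $o(\nu)$; equivalently, $\lambda^{n/q} \neq 1_G$ forces $\nu^{n/q} \neq 1_H$, i.e.\ $\nu$ still has full $q$-part in its order. Fix a prime $q$ dividing $n$. The key dichotomy is whether $q = p$ or $q \neq p$. If $q = p$: then $P \leq H$, and since $\lambda$ is linear its restriction to $P$ detects the $p$-part of $o(\lambda)$ — indeed $o(\lambda_P) = o(\lambda)_p$ because $G/\ker\lambda$ is cyclic and $P\ker\lambda/\ker\lambda$ is a Sylow $p$-subgroup of it, hence contains the full $p$-part. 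Since $P \leq H$, we get $o(\nu)_p \geq o(\lambda_P) = o(\lambda)_p$. If $q \neq p$: here I would use the normalizer hypothesis via a transfer/Frattini-type argument. The point is that $\norm G P \subseteq H$, so $H$ controls $p$-transfer, and more to the point, for the $q$-part of a \emph{linear} character (which factors through $G/[G,G]$, or better through the $q$-part of $G^{\rm ab}$), restriction to a subgroup containing a Sylow $q$-normalizer is injective on that $q$-part. But $H$ need not contain a Sylow $q$-subgroup; what it contains is $\norm G P$ with $p \neq q$.

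**Resolving the $q \neq p$ case.** The cleaner route: reduce to the abelianization. Let $\bar G = G/\ker\lambda$, a cyclic group, and let $\bar H$ be the image of $H$. Then $o(\lambda) = |\bar G|$ and $o(\nu) = |\bar H|$, so I must show $\bar H = \bar G$. Since $\bar G$ is cyclic, $\bar H = \bar G$ iff $\bar H$ contains a Sylow $q$-subgroup of $\bar G$ for every prime $q$. For $q = p$: the image of $P$ in $\bar G$ is a Sylow $p$-subgroup of $\bar G$ (as $\bar G$ is a quotient of $G$), and $P \leq H$, done. For $q \neq p$: I claim $P\ker\lambda = G$ would be too strong, but here's the real argument — in the cyclic group $\bar G$, a Sylow $q$-subgroup $\bar Q$ is normal (cyclic groups are nilpotent), so $\norm{\bar G}{\bar P} = \bar G \supseteq \bar Q$ where $\bar P$ is the image of $P$. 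Now the image of $\norm G P$ in $\bar G$ is contained in $\norm{\bar G}{\bar P} = \bar G$, which gives nothing directly. Instead: since $\bar G$ is abelian it is nilpotent, so $\bar G = \bar P \times \bar L$ where $\bar L$ is the $p$-complement; then $\bar L \leq \norm{\bar G}{\bar P}$ trivially, and $\norm{\bar G}{\bar P}$ is the image of $\norm G P$ — no wait, the image of $\norm G P$ in $\bar G$ need only be \emph{contained in} $\norm{\bar G}{\bar P}$, not equal to it. The fix is the standard fact: if $N \trianglelefteq G$ and $\bar G = G/N$ with Sylow $p$-subgroup $\bar P = PN/N$, then $\norm{\bar G}{\bar P} = \norm G P \cdot N / N$ (this is a routine consequence of the Frattini argument applied inside $\norm G P \cdot N$). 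Hence the image of $\norm G P$ in $\bar G$ \emph{is} $\norm{\bar G}{\bar P} = \bar G$ (since $\bar G$ is abelian), so $\bar G = $ image of $\norm G P \subseteq \bar H \subseteq \bar G$, forcing $\bar H = \bar G$ and thus $o(\nu) = |\bar H| = |\bar G| = o(\lambda)$.

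**Main obstacle.** The only delicate point is justifying $\norm{\bar G}{PN/N} = \norm G P \cdot N/N$, which is not true for arbitrary normal subgroups $N$ of arbitrary $G$ — it requires the Frattini argument: given $\bar x \in \norm{\bar G}{\bar P}$, lift to $x \in G$ with $P^x N = PN$, so $P^x$ and $P$ are both Sylow $p$-subgroups of $PN$, hence conjugate in $PN$ by some $n$, giving $P^{xn^{-1}} = P$ so $xn^{-1} \in \norm G P$ and $\bar x = \overline{xn^{-1}} \cdot \bar n \in \norm G P \cdot N/N$. This is standard, so the whole lemma is short; I would present it essentially as: reduce to $\bar G = G/\ker\lambda$ cyclic, show its image under the quotient-to-$H$ map is everything using that abelian groups are nilpotent and the Frattini argument identifies $\norm{\bar G}{\bar P}$ with the image of $\norm G P$.
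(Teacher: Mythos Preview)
Your argument is correct, and once you reach the third paragraph it is essentially the paper's proof: both establish $G = (\ker\lambda)H$ via the Frattini argument, from which $o(\lambda) = |G:\ker\lambda| = |H:\ker\nu| = o(\nu)$ follows immediately. The paper does this in one line by applying Frattini directly to $PG' \trianglelefteq G$ (so $G = G'\norm G P \subseteq (\ker\lambda)H$), whereas you pass to the quotient $\bar G = G/\ker\lambda$ and run the same Frattini computation there; the prime-by-prime decomposition and the $q = p$ versus $q \neq p$ dichotomy in your first two paragraphs are unnecessary scaffolding and can be dropped entirely.
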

\begin{proof}
If $\lambda=1_G$, then there is nothing to prove. We may assume $\lambda$ is non-principal and hence $G'<G$. We have that $P\sbs PG'\nor G$. By the Frattini argument, we have that $G=G'\norm G P=G'H$. Since $G'\sbs \ker \lambda$ and $\ker \nu=\ker \lambda \cap H$, the result follows.
\end{proof}

The proof of Theorem A   requires the use of a {\em magical} character; the canonical character associated to a {\em character five} defined by Isaacs in \cite{Isa73}. 
We can finally prove Theorem A.  

 \begin{teo}
Let $p$ be a prime and let $G$ be a finite solvable group. Let $\chi \in \irr G$
of degree not divisible by $p$, and let $P \in \syl pG$.
If $\chi(1)$ is odd, then
there exists $g \in \norm GP/P'$ such that $o(g)=f_\chi$. In particular, the Feit number $f_\chi$ divides $|\norm G P :P'|$.
\end{teo}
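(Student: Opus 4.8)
The plan is to reduce Theorem A to the two prime-by-prime statements that the earlier lemmas were built for: control at $p$ via $p$-special factors (Lemma~\ref{injection}) and control at each odd prime $q\neq p$ via $q$-special characters (Lemma~\ref{qroot}), always tracking the determinantal-order information through a Sylow normalizer by means of Lemma~\ref{normalizers}. First I would argue that it suffices to find, for each prime $q$ dividing $f_\chi$, an element of $\norm G P/P'$ whose order is exactly $(f_\chi)_q$: once we have such an element $g_q$ for each $q$, their product (after raising to suitable coprime powers to make the orders exact prime powers) gives an element of order $f_\chi$, and divisibility of $f_\chi$ by $|\norm G P:P'|$ follows immediately. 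So the whole theorem becomes a local statement: for a fixed prime $q\mid f_\chi$, produce $g\in\norm G P$ with $o(gP')=(f_\chi)_q$.

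For the prime $q=p$ the idea is that, because $p\nmid\chi(1)$, Lemma~\ref{prev} lets us pick a $P$-invariant constituent $\theta$ of $\chi_N$ for a suitable normal subgroup $N$, and the Isaacs character-five machinery attaches to $\theta$ a canonical character $\psi$ of a subgroup containing $P$ which is $p$-special on its relevant section; Lemma~\ref{injection} then says $\psi$ restricts faithfully to $P$, so the $p$-part of its field of values is seen inside $\Q_{|G|_p}$ and is realized by values on $P$, i.e.\ ultimately by a linear character detected on $\norm G P$. Lemma~\ref{normalizers} converts that linear character back into an element of $\norm G P/P'$ of the right order. For an odd prime $q\neq p$ the situation is cleaner: one peels off a $q$-special ``factor'' of $\chi$ using Gajendragadkar factorization (valid here because $G$ is solvable), applies Lemma~\ref{qroot} to see that a primitive $q$-th root of unity already forces $(f_\chi)_q>1$ to be witnessed, and pushes the corresponding linear/determinantal datum into $\norm G P$. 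The hypothesis that $\chi(1)$ is odd is what guarantees the relevant special factors behave well under these restrictions — this is exactly where the $\GL(2,3)$ counterexample would otherwise bite.

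The technical heart, and the step I expect to be the main obstacle, is the inductive setup around the character five: choosing the normal subgroup $N$ so that $\chi$ lies over a $P$-invariant $\theta$ whose canonical character $\psi$ genuinely controls $\Q(\chi)$, and then verifying that the field of values of $\chi$ is generated by the field of values of $\psi$ together with roots of unity coming from the quotient $G/N$, on which one can induct. Making the induction go through will require that the relevant quotients stay solvable with odd-degree characters of $p'$-degree — so the bulk of the work is bookkeeping: checking that $\chi$ corresponds under Clifford theory to an odd-degree $p'$-degree character of a proper section to which the inductive hypothesis applies, and that the element of $\norm G P/P'$ produced downstairs lifts to one upstairs of the same order. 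Once the reduction to a single prime is in place and the character-five input is correctly formulated, Lemmas~\ref{injection}, \ref{qroot} and \ref{normalizers} should assemble the conclusion without further difficulty.
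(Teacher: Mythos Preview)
Your opening reduction is already a genuine gap: having, for each prime $q\mid f_\chi$, an element $g_q\in \norm GP/P'$ of order $(f_\chi)_q$ does \emph{not} produce an element of order $f_\chi$ unless the $g_q$ commute, and $\norm GP/P'$ is not abelian in general. This is precisely why the paper never works one prime at a time on the element side; instead it runs an induction on $|G|$ and constructs a single character $\chi^*\in\irr H$ of a proper subgroup $H$ with $\norm GP\le H<G$ and $f_{\chi^*}=f_\chi$, then applies the inductive hypothesis to $\chi^*$ in $H$ to get the element in one stroke. The prime-by-prime work happens on the \emph{character} side (building $\chi^*$ as a product of $q$-special pieces $\chi_q^*$), not on the element side.

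Two further misalignments: first, Gajendragadkar factorisation of $\chi$ into $q$-special factors is available because $\chi$ is \emph{primitive} (after a standard reduction), not merely because $G$ is solvable; you need that reduction before any special-character argument starts. Second, you have the roles of the primes reversed. For $q\in\{2,p\}$ the hypothesis that $\chi(1)$ is odd and $p'$ forces $\chi_q$ to be \emph{linear}, and Lemma~\ref{normalizers} alone transports it to $H$ with the same order; no character five is needed here, and this is exactly where the odd-degree hypothesis is spent. The character-five machinery is used for odd $q\neq p$: one chooses $K=\Oh{p',p}G$, a chief factor $K/L$, and $H=L\,\norm GP$, and then for the $q$-special factor $\eta=\chi_q$ one analyses $\eta_H$ via the fully-ramified case $(G,K,L,\theta,\phi)$ to produce a $q$-special $\eta_0\in\irr H$ with $\Q(\eta)=\Q(\eta_0)$. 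Lemma~\ref{qroot} is not used to ``witness'' $(f_\chi)_q$ directly; it is used to upgrade the equality $F(\eta)=F(\eta_0)$ over $F=\Q(\zeta_q)$ (which is what the Galois argument with the canonical $\psi$ gives) to $\Q(\eta)=\Q(\eta_0)$. Without this corrected architecture the proposal does not close.
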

\begin{proof} By the Amit-Chillag theorem, we may assume that $p$ divides $|G|$.
We proceed by induction on $|G|$. 

\smallskip

Let $N \nor G$. If $\theta \in \irr N$ is $P$-invariant and lies under $\chi$, then we may assume that $\theta$ is $G$-invariant.
Let $\psi \in \irr{G_\theta|\theta}$ be the Clifford correspondent of $\chi$. 
By the character formula for induction,
$\Q(\chi)\sbs \Q(\psi)$ and $\chi(1)=|G:G_\theta|\psi(1)$. Thus the character $\psi$ satisfies the hypotheses of the theorem in $G_\theta$ and $f_\chi$ divides $f_\psi$. If $G_\theta <G$, then by induction there exists some $g \in \norm {G_\theta} P /P'\leq \norm G P /P'$ (notice that the $P$-invariance of $\theta$ implies $P\sbs G_\theta$) such that $o(g)=f_\psi$. Hence, some power of $g$ has order $f_\chi$ and we may assume $G_\theta=G$.

\smallskip

We claim that we may assume that $\chi$ is primitive. Otherwise, suppose that $\chi$ is induced from $\psi \in \irr H$ for some $H<G$. In particular, $p$ does not divide $|G:H|$ and so $H$ contains some Sylow $p$-subgroup of $G$, which we may assume is $P$. Again by the character formula for induction, the degree $\psi(1)$ is an odd $p'$-number and $f_\chi $ divides $f_\psi$. By induction
there is  $g \in \norm H P /P'\leq \norm G P /P'$  such that $o(g)=f_\psi$.
Thus some power of $g$ has order $f_\chi$, as claimed.

\smallskip

By Theorem 2.6 of \cite{Isa81} the primitive character $\chi$ factorizes as a product
\begin{align*}
\chi=\prod_{q} \chi_{q},
\end{align*}
 where the $\chi_{q}$ are $q$-special characters of $G$ for distinct primes $q$. Let $\sigma \in  \galn{\Q_{|G|}}{\Q(\chi)}$. Then
 \begin{align*}
 \prod_q \chi_q^\sigma =\prod_q \chi_q.
 \end{align*}
 By using the uniqueness of the product of special characters (see Proposition 7.2 of \cite{Gaj79}), we conclude that $\chi_q^\sigma=\chi_q$ for every $q$. Hence $f_{\chi_q}$ divides $f_\chi$ for every $q$, and since the $f_{\chi_q}$'s are coprime also $\prod_ q f_{\chi_q}$ divides $f_\chi$. Notice that $\Q(\chi)\sbs \Q(\chi_q \ | \ q )\sbs \Q_{\prod_ q f_{\chi_q}}$ by elementary Galois theory. This implies the equality $f_\chi=\prod_{q} f_{\chi_{q}}$.
   
Now, consider $K=\Oh{p', p} G<G$. Notice that $PK =\Oh p G \nor G$. By the Frattini argument $G=PK \norm G P =K \norm G P$. If $K=1$, then $P\nor G$ and
we are done in this case.
We may assume that $K>1$. Let $K/L$ be a chief factor of $G$. Then $K/L$ is an abelian
 $p'$-group. If $H=\norm G P L$, then $G=KH$ and $K\cap H =L$,
 by a standard group
 theoretical argument. Furthermore, all the complements of $K$ in $G$ are $G$-conjugate to $H$. Finally, notice  that $\cent{K/L}P=1$ 
 using that $H \cap K=L$. 
 
 \smallskip
 
 We claim that for every $q$, there exists some $q$-special $\chi_q^* \in \irr{H}$ such that $f_{\chi_q^*}=f_{\chi_q}$ and
 $\chi_q^*(1)$ is an odd $p'$-number. 

\smallskip
 
If $q \in \{ 2, p \}$, then $\lambda=\chi_q$ is linear
 (because $\chi$ has odd $p'$-degree). Let $\lambda^*=	\lambda_H$. Then $\lambda^*$ is $q$-special (since $\lambda$ is linear and $q$-special, this is straight-forward from the definition) and 
 $f_{\lambda^*}=f_\lambda$ by Lemma \ref{normalizers}. 

Let $q\neq p$ be an odd prime and write $\eta=\chi_q$. We work to find some $\eta^* \in \irr H$ of odd $p'$-degree with $f_{\eta^*}=f_\eta$. By Lemma \ref{prev}, let $\theta \in \irr K$ 
 be some $P$-invariant constituent of $\eta_K$ and let $\phi \in \irr L$ be some $P$-invariant constituent of $\eta_L$.
 By the second paragraph of the proof, we know that
 both $\theta$ and $\phi$ are $G$-invariant and hence $\phi$ lies under $\theta$.
 By Theorem 6.18 of \cite{Isa76} one of the following holds:
\begin{enumerate}
\item[(a)] 
$\theta_L=\sum_{i=1}^t \phi_i$, where the $\phi_i\in \irr L$ are distinct and $t=|K:L|$,
\item[(b)] $\theta_L \in \irr L$, or
\item[(c)] $\theta_L =e \phi$, where $\phi \in \irr L$ and $e^2=|K:L|$.
\end{enumerate}

Notice that the situation described
in $(a)$ cannot occur here, because 
$\phi$ is $G$-invariant.

\smallskip

In the case described in $(b)$, we have $\phi=\theta_L \in \irr L$. Then restriction 
defines a bijection between the set of irreducible characters of $G$ lying over 
$\theta$ and the set of irreducible characters of $H$ lying over $\phi$
(by Corollary (4.2) of \cite{Isa86}). 
Write $\xi=\eta_H$. By Theorem A of \cite{Isa86}, 
we know that $\xi$ is $q$-special.
We claim that
 $\Q(\eta)=\Q(\xi)$. Clearly,
 $\Q(\xi) \sbs \Q(\eta)$.
 If $\sigma \in {\rm Gal}(\Q(\eta)/\Q(\xi))$,
 then notice that $\phi$ is $\sigma$-invariant because $\xi_L$
 is a multiple of $\phi$. Now, $\phi$ is $P$-invariant, and because
 $\cent{K/L}P=1$, there is a unique $P$-invariant character over
 $\phi$ (by Problem 13.10 of \cite{Isa76}).
 By uniqueness, we deduce that $\theta^\sigma=\theta$.
 Now, $\eta^\sigma$ lies over $\theta$ and restricts to $\xi$,
 so we deduce that $\eta^\sigma=\eta$, by the uniqueness
 in the restriction. Thus $\Q(\eta)=\Q(\xi)$. We write $\eta^*=\xi$.
\smallskip

Finally, we consider the situation described in $(c)$. Since $\theta_L$ is not irreducible, then $|K:L|$ is not a $q'$-group, by Corollary 11.29 of \cite{Isa76}. Hence $K/L$ is $q$-elementary abelian and $e$ is a power of $q$. By Theorem 3.1 of \cite{Nav02} (and using that all the complements
of $K/L$ in $G/L$ are conjugate), there exists a (not necessarily irreducible) character $\psi$ of $G$ such that:
\begin{enumerate}
\item[(i)] $\psi$ contains $K$ in its kernel, $\psi(g)\neq 0$ for every $g \in G$, $\psi(1)=e$ and the determinantal order of $\psi$ is a power of $q$.
\item[(ii)] if $K\sbs W \leq G $ and $\xi \in \irr{W|\theta}$, then $\xi_{W\cap H}=\psi_{W\cap H}\xi_0$ for a unique irreducible character $\xi_0$ of $W\cap U$. 
\end{enumerate}
The character $\psi$ is defined in \cite{Isa73} (which is
a canonical character arising from the {\em character five} $(G,K,L,\theta,\phi)$). In particular, $\eta_H=\psi \eta_0$, so that $\eta_0 \in \irr{H|\phi}$ (where we are viewing $\psi$ as a character of $H$). We claim that $\eta_0$ is $q$-special. First notice that $\eta_0(1)=\eta(1)/e$ is a power of $q$. Now, we want to show that whenever  $S$ is a subnormal subgroup of $H$, the irreducible consituents of $(\eta_0)_S$ have determinantal order a power of $q$. Since $(\eta_0)_L$ is a multiple of $\phi$, which is $q$-special, we only need to control the irreducible constituents of $(\eta_0)_S$ when $L\sbs S \nor \nor H$,
by using Proposition 2.3 of \cite{Gaj79}. We have that $K\sbs SK \nor \nor G$. Write 
$$ \eta_{SK}=a_1\gamma_1+\cdots+a_r \gamma _r,$$
where the $\gamma_i \in \irr {SK}$ are $q$-special because $\eta$ is $q$-special. By using the property $(ii)$ of $\psi$, we have that  $\eta_S=\psi_S(\eta_0)_S$ also decomposes as
\begin{align*}
\eta_S & = a_1\psi_S(\gamma_1)_0+\cdots+a_r\psi_S(\gamma_r)_0\\
& = \psi_S(a_1(\gamma_1)_0+\cdots+a_r(\gamma_r)_0).
\end{align*}
 Since $\psi$ never vanishes on $G$, we conclude that $(\eta_0)_S=a_1(\gamma_1)_0+\cdots+a_r(\gamma_r)_0$. It suffices to see that $o((\gamma_i)_0)$ is a power of $q$ for every $\gamma_i$ constituent of $\eta_{SK}$. Just notice that
 \begin{align*} \det{(\gamma_i)_S}&=\det{\psi_S(\gamma_i)_0}\\
 &=\det{\psi_S}^{(\gamma_i)_0(1)}\det{(\gamma_i)_0}^e.
 \end{align*}
 Since $o(\psi)$, $o(\gamma_i)$, $\gamma_i(1)$ and $e$ are powers of $q$, we easily conclude that also the determinantal order of $(\gamma_i)_0$ is a power of $q$.
 This proves that $\eta_0$ is $q$-special.
 We claim that $\Q(\eta)=\Q(\eta_0)$ so that the two Feit numbers are the same. The first thing that we
 notice is that the values of $\psi$ lie in $\Q_q$. By Theorem 9.1 and the discussion at the end of the page 619 of \cite{Isa73}, the values of the character $\psi$ are linear combination of values of the bilinear multiplicative symplectic form $\ll \, ,\,   \gg_\phi: K \times K \rightarrow \C^*$ associated to $\phi$ (defined at the beginning of section 2 of \cite{Isa73}). The values of $\ll \, , \, \gg_\phi$ are values of linear characters of cyclic subgroups of $K/L$. Since $K/L$ is $q$-elementary abelian, we do obtain that
  $\Q(\psi)\sbs \Q_q$. We next see that $\eta$ and $\eta_0$ are non-principal. 
  This is obvious because $\theta$
  and $\phi$ are fully ramified.     Now, let $\zeta$ be a primitive $q$-th root of unity and write $F=\Q(\zeta)$. Suppose that $\sigma \in \galn{\Q_{|G|}} F$ stabilizes $\eta$. Then
 \begin{align*}
 \psi \eta_0=\psi^\sigma\eta_0^\sigma=\psi\eta_0^\sigma.
 \end{align*}
 Using that $\psi$
 is never zero, we conclude that that $\eta_0^\sigma=\eta_0$.
  Now, by Theorem 9.2 of \cite{Isa73}, we have that $\xi$ and $\xi_0$ correspond (in the sense of Theorem 9.1 of \cite{Isa73}) if and only if $(\xi_0)^G=\psi \xi$. Hence, if $\sigma \in \galn{\Q_{|G|}} F$ and $\chi_0^\sigma=\chi_0$, then $\psi \eta =(\eta_0)^G=(\eta_0^\sigma)^G=(\psi \eta)^\sigma$. This implies
  again that $\eta^\sigma=\eta$.
   By Galois theory, we have that
    $F(\eta)=F(\eta_0)$. By Lemma \ref{qroot}, this implies $\Q(\eta)=\Q(\eta_0)$. We set $\eta^*=\eta_0$. The claim follows.

    \smallskip
    
    Now, we define $\chi^*=\prod_q {\chi_q}^* $ which
    has odd $p'$-degree. The character $\chi^*$ is irreducible by Proposition 7.2 of \cite{Gaj79}. Also $f_{\chi^*}=\prod _q{\chi^*_q}$ as in the fourth paragraph of this proof. Hence
    
     \begin{align*}
    f_{\chi^*}=\prod _q f_{{\chi_q}^*}=\prod_q f_{\chi_q} =f_\chi.
    \end{align*} By the inductive hypothesis, there exists $g \in \norm H P /P'\leq \norm G P /P'$ such that $o(g)=f_{\chi^*}$ and we are done. 
    \end{proof}

It is natural to ask if Feit's conjecture admits a version for ($p$-)Brauer characters.
Using the deep theory in \cite{Isa84}, this is easy to prove for solvable groups. Let $G$ be a solvable group. We write $G^0$ to denote the set of $p$-regular elements of $G$ (elements whose order is not divisible by $p$).
If $\phi \in \ibr G$, by Corollary 10.3 of \cite{Isa84} there exists a canonically
defined $\chi \in \irr G$ such that $\chi_{G^0}=\phi$ (the character $\chi$ is canonical as an Isaacs' $B_{p'}$-character). By the uniqueness of the lifting, we have that $\Q(\chi)=\Q(\phi) \sbs \Q_{|G|_{p'}}$. By the Amit-Chillag theorem
there exists $g \in G$ such that $o(g)=f_\chi=f_\phi$ (of course $g$ is $p$-regular). However, we have noticed in the introduction that Feit's conjecture does not hold for Brauer characters in general.  
  
\section*{Acknowledgement}
 I would like to thank G. Malle and G. Navarro for useful remarks on a previous version of this paper.

\bigskip

\end{document}